\documentclass[12pt]{amsart}


\usepackage{amsfonts,amsmath,amsthm}
\usepackage{latexsym}


\newtheorem{theorem}{Theorem}[section]
\newtheorem{corollary}[theorem]{Corollary}

\newtheorem{remark}[theorem]{Remark}
\newtheorem{problem}[theorem]{Problem}

\newtheorem{conjecture}[theorem]{Conjecture}
\newtheorem{proposition}[theorem]{Proposition}

\theoremstyle{definition}

\newcommand{\dst}{\displaystyle}

\newcommand{\TT}{\ensuremath{\mathbb{T}}}

\newcommand{\ZZ}{\ensuremath{\mathbb{Z}}}

\newcommand{\CC}{\ensuremath{\mathbb{C}}}
\newcommand{\Co}{\ensuremath{\mathbb{C}}}

\def \C {\mathbb{C}}

\newcommand{\ac}{\ensuremath{\mathcal{A}}}
\newcommand{\bc}{\ensuremath{\mathcal{B}}}
\newcommand{\gc}{\ensuremath{\mathcal{G}}}
\newcommand{\ig}{\ensuremath{\mathcal{G}}}

\newcommand{\vb}{\ensuremath{\mathbf{v}}}
\newcommand{\eb}{\ensuremath{\mathbf{e}}}
\newcommand{\fb}{\ensuremath{\mathbf{f}}}
\newcommand{\ub}{\ensuremath{\mathbf{u}}}
\newcommand{\wb}{\ensuremath{\mathbf{w}}}

\newcommand{\cb}{\ensuremath{\mathbf{c}}}
\newcommand{\sbb}{\ensuremath{\mathbf{s}}}

\def \< {\langle}
\def \> {\rangle}


\newcommand{\ent}[1]{{\left[{#1}\right]}}
\newcommand{\abs}[1]{{\left|{#1}\right|}}
\newcommand{\scal}[1]{{\left\langle{#1}\right\rangle}}

\begin{document}

\title[Real and complex unbiased Hadamard matrices]{Real and complex unbiased Hadamard matrices}

\author[M. Matolcsi]{M. Matolcsi}
\address{M. M.: Alfr\'ed R\'enyi Institute of Mathematics,
Hungarian Academy of Sciences POB 127 H-1364 Budapest, Hungary
Tel: (+361) 483-8307, Fax: (+361) 483-8333}
\email{matomate@renyi.hu}

\author[I.Z. Ruzsa]{I.Z. Ruzsa}
\address{I.Z. R.: Alfr\'ed R\'enyi Institute of Mathematics,
Hungarian Academy of Sciences POB 127 H-1364 Budapest, Hungary
Tel: (+361) 483-8328, Fax: (+361) 483-8333}
\email{ruzsa@renyi.hu}

\author[M. Weiner]{M. Weiner}
\address{M. W.: Budapest University of Technology and Economics (BME),
H-1111, Egry J. u. 1, Budapest, Hungary
Tel: (+361) 463-2324}
\email{mweiner@renyi.hu}

\thanks{M.M supported by the ERC-AdG 228005, and OTKA Grants No. K81658, K77748, and the Bolyai Scholarship. I.Z. R. supported by ERC-AdG 228005, and OTKA Grant No. K81658. M. W. supported in part by the ERC-AdG 227458 OACFT}

\begin{abstract}
We use combinatorial and Fourier analytic arguments to prove various non-existence results on systems of real and complex unbiased Hadamard matrices. In particular, we prove that a complete system of complex mutually unbiased Hadamard matrices (MUHs) in any dimension $d$ cannot contain more than one real Hadamard matrix. We also give new proofs of several known structural results in low dimensions, for $d\le 6$.
\end{abstract}

\maketitle

\bigskip

\section{Introduction}

A new approach to the problem of mutually unbiased bases (MUBs) was recently given in \cite{mubfourier}, based on a general scheme in additive combinatorics. In  this paper we continue the investigations along this line, and prove several non-existence results concerning complete systems of MUBs, as well as some structural results in low dimensions. Let us remark here that the existence of MUBs is equivalent to the existence of mutually unbiased Hadamard matrices (MUHs) as explained below. In most of the paper it will be more convenient to deal with MUHs.

\medskip

The paper is organized as follows. The Introduction contains a standard summary of relevant notions and results concerning MUBs and MUHs. We also recall some elements of the general combinatorial scheme which was used in \cite{mubfourier}. In Section \ref{sec2} we use discrete Fourier analysis to prove several structural results on MUHs in low dimensions. Finally, in Section \ref{sec3} we prove non-existence results including the main result of the paper: a complete system of MUHs can contain at most one real Hadamard matrix. We also give a new proof, without using computer algebra, of the fact the Fourier matrix $F_6$ cannot be part of a complete system of MUHs in dimension 6. 

\medskip

Recall that two orthonormal bases in $\CC^d$,
$\ac=\{\eb_1,\ldots,\eb_d\}$ and $\bc=\{\fb_1,\ldots,\fb_d\}$ are
called \emph{unbiased} if for every $1\leq j,k\leq d$,
$\abs{\scal{\eb_j,\fb_k}}=\dst\frac{1}{\sqrt{d}}$. In general, we will say that two unit vectors $\ub$ and $\vb$ are \emph{unbiased} if $\abs{\scal{\ub,\vb}}=\dst\frac{1}{\sqrt{d}}$. A collection
$\bc_0,\ldots\bc_m$ of orthonormal bases is said to be
\emph{(pairwise) mutually unbiased} if every two of them are
unbiased. What is the maximal number of pairwise mutually unbiased bases (MUBs) in $\CC^d$? This question originates from quantum information theory and has been investigated thoroughly over the past decades (see \cite{durt} for a recent comprehensive survey on MUBs). The following result is well-known (see e.g. \cite{BBRV,BBELTZ,WF}):

\begin{theorem}\label{thm1}
The maximal number of mutually unbiased bases in $\Co^d$ is at most $d+1$.
\end{theorem}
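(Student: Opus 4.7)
The plan is to translate the combinatorial unbiasedness condition into an orthogonality statement about rank-one projections in the real Hilbert space $\mathcal{H}$ of Hermitian $d \times d$ matrices, equipped with the Hilbert--Schmidt inner product $(A,B) \mapsto \tr(AB)$. To each orthonormal basis $\bc = \{\eb_1,\ldots,\eb_d\}$ I attach the projections $P_j := \eb_j \eb_j\adj$. Since $\tr(P_j P_k) = \abs{\scal{\eb_j, \eb_k}}^2$, orthonormality within $\bc$ gives $\tr(P_j P_k) = \delta_{jk}$, and unbiasedness between two bases $\bc, \bc'$ gives $\tr(P_j P'_k) = 1/d$ for every $j,k$.

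The next step is to centre these projections, setting $Q_j := P_j - \frac{1}{d} I$, so that the $Q_j$ lie in the traceless Hermitian subspace $\mathcal{H}_0 \subset \mathcal{H}$ of real dimension $d^2-1$. A direct expansion of $\tr(Q_j Q_k)$ using the identities above yields
\[
\tr(Q_j Q_k) = \delta_{jk} - \tfrac{1}{d} \quad \text{within one basis,} \qquad \tr(Q_j Q'_k) = 0 \quad \text{between two unbiased bases.}
\]
The $d \times d$ intra-basis Gram matrix $(\delta_{jk} - 1/d)$ has rank $d-1$ with the all-ones vector in its kernel (reflecting $\sum_j Q_j = 0$), so the span $V(\bc) := \span\{Q_1,\ldots,Q_d\}$ has real dimension exactly $d-1$; and the vanishing of the cross-traces says precisely that $V(\bc) \perp V(\bc')$ whenever $\bc$ and $\bc'$ are unbiased.

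Finally, if $\bc_0, \ldots, \bc_m$ is a family of pairwise MUBs, the $m+1$ subspaces $V(\bc_i) \subset \mathcal{H}_0$ are mutually orthogonal and each has dimension $d-1$, so
\[
(m+1)(d-1) \le \dim \mathcal{H}_0 = d^2-1 = (d-1)(d+1),
\]
and therefore $m+1 \le d+1$. There is no genuine obstacle in the argument; once the two trace identities for $\tr(Q_j Q_k)$ are in hand, the conceptual content is simply that unbiasedness of two bases is equivalent to Hilbert--Schmidt orthogonality of their associated traceless projection subspaces, after which a dimension count finishes the proof.
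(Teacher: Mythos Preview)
Your argument is correct and is the standard ``Bloch representation'' proof of this bound: centring the rank-one projections to land in the traceless Hermitian space $\mathcal{H}_0$ of real dimension $d^2-1$, checking that unbiased bases give Hilbert--Schmidt orthogonal $(d-1)$-dimensional subspaces, and counting dimensions. All the trace computations and the rank of the intra-basis Gram matrix $I - \tfrac{1}{d}J$ are right.

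Note, however, that the paper does not actually supply its own proof of Theorem~\ref{thm1}: it is quoted as a well-known result with references to \cite{BBRV,BBELTZ,WF}. So there is no in-paper proof to compare against; what you have written is essentially the argument one finds in those cited sources.
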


Another important result concerns prime-power dimensions (see e.g.
\cite{BBRV,Iv,KR,WF}).

\begin{theorem}\label{thm2}
A collection of $d+1$ mutually unbiased bases (called a {\it complete set} of MUBs) exists
if the dimension $d$ is a prime or a prime-power.
\end{theorem}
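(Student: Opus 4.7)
The plan is to give an explicit construction of $d+1$ mutually unbiased bases when $d=p^n$ is a prime power. First I would treat the cleaner case $d=p$ an odd prime using quadratic phases; then extend to odd prime powers by replacing $\mathbb{F}_p$ with $\mathbb{F}_{p^n}$ and its field trace. The characteristic $2$ case requires a genuinely different argument, and I expect it to be the main obstacle.

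For $d=p$ an odd prime, let $\omega=e^{2\pi i/p}$ and take the standard basis $\{\mathbf{e}_j\}_{j\in\mathbb{F}_p}$ as the first basis. For each $a\in\mathbb{F}_p$ define a further basis $B_a=\{v_b^{(a)}:b\in\mathbb{F}_p\}$ by
\[
v_b^{(a)}=\frac{1}{\sqrt{p}}\sum_{j\in\mathbb{F}_p}\omega^{aj^2+bj}\,\mathbf{e}_j.
\]
This produces $1+p=d+1$ candidate bases, and I would verify three items. Unbiasedness with the standard basis is immediate, since every coordinate of every $v_b^{(a)}$ has modulus $1/\sqrt{p}$. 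Orthonormality inside $B_a$ follows from character orthogonality: $\langle v_b^{(a)},v_{b'}^{(a)}\rangle=\frac{1}{p}\sum_j\omega^{(b'-b)j}=\delta_{b,b'}$. For distinct $a\neq a'$, $\langle v_b^{(a)},v_{b'}^{(a')}\rangle=\frac{1}{p}\sum_j\omega^{(a'-a)j^2+(b'-b)j}$; completing the square (legitimate because $2(a'-a)$ is invertible in $\mathbb{F}_p$) rewrites this as a unimodular factor times a quadratic Gauss sum of modulus $\sqrt{p}$, yielding the required $1/\sqrt{p}$.

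For $d=p^n$ with $p$ odd I would replace the exponent $aj^2+bj$ by $\mathrm{Tr}(aj^2+bj)$, where $\mathrm{Tr}\colon\mathbb{F}_{p^n}\to\mathbb{F}_p$ is the field trace, and keep $\omega=e^{2\pi i/p}$. The three verifications go through verbatim because $x\mapsto\omega^{\mathrm{Tr}(x)}$ is a non-trivial additive character of $\mathbb{F}_{p^n}$, and a non-degenerate quadratic Gauss sum over $\mathbb{F}_{p^n}$ still has absolute value $\sqrt{p^n}$.

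The main obstacle is $p=2$: the quadratic form $aj^2$ degenerates in characteristic $2$, so the construction above collapses. I would treat this case by one of the known substitutes --- either lifting to the Galois ring $GR(4,n)$ and using the Teichm\"uller system of representatives to define quadratic-like phases, or, equivalently, extracting the MUBs from a partition of the $n$-qubit Pauli group into $d+1$ maximal abelian subgroups via a symplectic decomposition of $\mathbb{F}_2^{2n}$. Both routes produce $2^n+1$ MUBs, but the unbiasedness estimate in characteristic $2$ is the technically heaviest step and is where the bulk of the work lies.
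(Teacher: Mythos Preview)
The paper does not actually prove this theorem; it is stated in the Introduction as a known result with references to \cite{BBRV,Iv,KR,WF}, and no proof is given. Your outline is a correct sketch of the standard construction found in those references: the quadratic-phase construction (Ivanovic, Wootters--Fields) for odd $p$ and its trace-extension to $\mathbb{F}_{p^n}$, together with the Galois-ring $GR(4,n)$ or Pauli-partition approach for characteristic $2$ (as in Klappenecker--R\"otteler). So there is nothing to compare against in the paper itself; your plan matches the cited literature and is sound, with the characteristic-$2$ case correctly flagged as the part requiring a separate argument.
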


However, if the dimension $d=p_1^{\alpha_1}\dots p_k^{\alpha_k}$ is composite then very little
is known except for the fact that there are at least $p_j^{\alpha_j}+1$ mutually
unbiased bases in $\C^d$ where $p_j^{\alpha_j}$ is the smallest of the prime-power divisors. In some specific square dimensions there is a construction based on orthogonal Latin squares which yields more MUBs than $p_j^{\alpha_j}+1$ (see \cite{262}). It is also known \cite{mweiner} that the maximal number of MUBs cannot be exactly $d$ (i.e. it is either $d+1$ or strictly less than $d$).

The following basic problem remains open for all non-primepower dimensions:

\begin{problem}\label{MUB6problem}\
Does a complete set of $d+1$ mutually unbiased bases exist in $\Co^d$ if $d$ is not a prime-power?
\end{problem}

The answer is not known even for $d=6$, despite considerable efforts over the past few years
(\cite{BBELTZ,config,ujbrit,arxiv}). The case $d=6$ is particularly tempting because it seems to be the simplest to handle with algebraic and numerical methods.  As of now, numerical evidence suggests that the maximal number of  MUBs for $d=6$ is 3 (see \cite{config,ujbrit,numerical,Za}).

\medskip

It will also be important for us to recall that mutually unbiased bases
are naturally related to mutually unbiased \emph{complex Hadamard matrices}.
Indeed, if the bases $\bc_0,\ldots,\bc_m$ are mutually
unbiased we may identify each
$\bc_l=\{\eb_1^{(l)},\ldots,\eb_d^{(l)}\}$ with the \emph{unitary}
matrix
$$
[U_l]_{j,k}=\ent{\scal{\eb_j^{(0)},\eb_k^{(l)}}_{1\leq k,j\leq
d}},
$$
{\it i.e.} the $k$-th column of $U_l$ consists of the
coordinates of the $k$-th vector of $\bc_l$ in the basis $\bc_0$.
(Throughout the paper the scalar product $\scal{.,.}$ of $\C^d$ is
conjugate-linear in the first variable and linear in the second.) With this convention,
$U_0=I$ the identity matrix, and all other matrices are unitary
and have all entries of modulus $1/\sqrt{d}$. Therefore, for $1\le l\le m$ the matrices
$H_l=\sqrt{d}U_l$ have all entries of modulus 1 and complex orthogonal
rows (and columns). Such matrices are called \emph{complex
Hadamard matrices}. It is thus clear that the existence of a family of $m+1$
mutually unbiased bases $\bc_0,\ldots,\bc_m$ is equivalent to
the existence of a family of $m$ complex Hadamard matrices
$H_1,\ldots, H_m$ such that for all
 $1\leq j\not=k\leq
m$, $\frac{1}{\sqrt{d}}H^{*}_jH_k$ is again a complex Hadamard matrix. In such
a case we will say that these complex Hadamard matrices are {\it
mutually unbiased} (MUHs). A system $H_1,\ldots, H_m$ of MUHs is called \emph{complete} if $m=d$ (cf. Theorem \ref{thm1}). We remark that there has been a recent interest in \emph{real} unbiased Hadamard matrices \cite{hadi1, hadi2, lecompte}, and the main result of this paper is that no pair of real unbiased Hadamard matrices can be part of a complete system of MUHs (see Corollary \ref{cornoreal}). The system $H_1, \dots H_m$ of MUHs will be called \emph{normalized} if the first column of $H_1$ has all coordinates 1, and all the columns in all the matrices have first coordinate 1. It is clear that this can be achieved by appropriate multiplication of the rows and columns by umimodular complex numbers. We will also use the standard definition that two complex Hadamard matrices $H_1$ and $H_2$ are equivalent, $H_1\cong H_2$, if
$H_1=D_1P_1H_2P_2D_2$ with unitary diagonal matrices $D_1, D_2$
and permutation matrices $P_1, P_2$.

\medskip

The crucial observation in \cite{mubfourier} is that the columns of $H_1, \dots , H_m$ can be regarded as elements of the group $\mathcal{G}=\TT^d$, where $\TT$ stands for the complex unit circle. By doing so, we can use Fourier analysis on $\gc$ to investigate the problem of MUHs.
We will now collect some notations that will be used in later sections (the notations in this paper are somewhat different and more convenient than in \cite{mubfourier}).
The group operation in $\gc$ is complex multiplication in each coordinate. The dual group is $\hat{\gc}=\ZZ^d$, and the action of a character $\gamma=(r_1,r_2, \dots , r_d)\in \ZZ^d$ on a group element $\vb=(v_1, v_2, \dots , v_d) \in \TT^d$ is given by exponentiation in each coordinate $\gamma(\vb)=\vb^\gamma=v_1^{r_1}v_2^{r_2}\dots v_d^{r_d}$. The Fourier transform of (the indicator function of) a set $S\subset \ig$ is given as $\hat{S}(\gamma)=\sum_{\sbb\in S} \sbb^\gamma$.

\medskip

As in \cite{mubfourier}, introduce the orthogonality set $ORT_d=\{\vb=(v_1, \dots ,v_d)\in \TT^d \ : \ v_1+ \dots + v_d=0\}$, and the unbiasedness set $UB_d=\{\vb=(v_1, \dots ,v_d)\in \TT^d \ : \ |v_1+ \dots + v_d|^2-d=0\}$. Then the (coordinate-wise) quotient $\vb/\ub$ of any two columns from the matrices $H_1, \dots H_m$ will fall into either $ORT_d$ (if $\vb$ and $\ub$ are in the same matrix) or into $UB_d$ (if  $\vb$ and $\ub$ are in different matrices). This enables one to invoke the general combinatorial scheme which we called "Delsarte's method": we refer the reader to \cite{mubfourier} for the details.

\section{Structural results on MUBs in low dimensions}\label{sec2}

In what follows we will assume that a \emph{complete} system of MUHs $H_1, \dots H_d$ is given. In fact, much of the discussion below remains valid for non-complete systems after appropriate modifications, but it will be technically easier to restrict ourselves to the complete case. The general aim is to establish structural properties of $H_1, \dots H_d$ which give restrictions on what a complete system may look like. If some of these properties were to contradict each other in a non-primepower dimension $d$, then we could conclude that a complete system of dimension $d$ does not exist. This is one of the main tasks for future research, mainly for $d=6$. We will give some non-existence results in this direction in Section \ref{sec3}.

\medskip

Consider each appearing complex Hadamard matrix $H_j$ as a $d$-element set in $\TT^d$ (the elements are the columns $\cb_1, \dots \cb_d$ of the matrix; the dependence on $j$ is suppressed for simplicity), and introduce its Fourier transform
\begin{equation}\label{gj}
g_j(\gamma):=\hat{H_j}(\gamma)= \sum_{k=1}^d \cb_k^\gamma \ \ \ \ \ \ \textrm{for each} \ \gamma\in \ZZ^d.
\end{equation}
Notice that the orthogonality of the \emph{rows} of $H_j$ implies that if $\rho\in\ZZ^d$ is any permutation of the vector $(1,-1,0,0,\dots ,0)$ then
\begin{equation}\label{gj0}
g_j(\rho)=0.
\end{equation}
Also, note that conjugation is the same as taking reciprocal for unimodular numbers, i.e. $\overline{g_j(\gamma)}=\sum_{k=1}^d \cb_k^{-\gamma}$, and therefore the square of the modulus of $g_j(\gamma)$ can be written as
\begin{equation}\label{Gj}
G_j(\gamma):=|g_j(\gamma)|^2= \sum_{k, l=1}^d (\cb_k/\cb_l)^\gamma \ \ \ \ \ \ \textrm{for each} \ \gamma\in \ZZ^d.
\end{equation} Also, introduce the notation
\begin{equation}\label{G}
G(\gamma):=\sum_{j=1}^d G_j(\gamma) \ \ \ \ \ \ \textrm{for each} \ \gamma\in \ZZ^d.
\end{equation}
In similar fashion, introduce the Fourier transform of the whole system as
\begin{equation}\label{f}
f(\gamma):=\sum_{j=1}^d g_j(\gamma) \ \ \ \ \ \ \textrm{for each} \ \gamma\in \ZZ^d, \ \ \ \textrm{and}
\end{equation}
\begin{equation}\label{F}
F(\gamma):=|f(\gamma)|^2 =\sum_{\ub,\vb}^d (\ub/\vb)^\gamma\ \ \ \ \ \ \textrm{for each} \ \gamma\in \ZZ^d,
\end{equation}
where the summation goes for all pair of columns $\ub, \vb$ in the matrices $H_1, \dots , H_d$.

\medskip

The main advantage of taking Fourier transforms is that any polynomial relation (such as orthogonality or unbiasedness) among the entries of the matrices $H_j$ will be turned into a \emph{linear} relation on the Fourier side. We will collect here linear equalities and inequalities concerning the functions $F(\gamma)$ and $G(\gamma)$.

\medskip

Let $\pi_r=(0,0,\dots 0,1,0, \dots 0)\in \ZZ^d$ denote the vector with the $r$th coordinate equal to 1. Then for each $j=1, \dots d$ we have
\begin{equation*}\label{Gjtileproof}
\sum_{r=1}^d G_j(\gamma+\pi_r)=\sum_{r=1}^d \left (\sum_{k, l=1}^d (\cb_k/\cb_l)^{\gamma+\pi_r} \right )= \sum_{k, l=1}^d (\cb_k/\cb_l)^{\gamma}\left (\sum_{r=1}^d (\cb_k/\cb_l)^{\pi_r} \right ),
\end{equation*}
and observe that the last sum is zero by orthogonality if $k\ne l$, while it is $d$ if $k=l$. This means that for each $j=1, \dots d,$
\begin{equation}\label{Gjtile}
\sum_{r=1}^d G_j(\gamma+\pi_r)=d^2 \ \ \ \ \ \ \textrm{for each} \ \gamma\in \ZZ^d,
\end{equation}
which then implies
\begin{equation}\label{Gtile}
\sum_{r=1}^d G(\gamma+\pi_r)=d^3 \ \ \ \ \ \ \textrm{for each} \ \gamma\in \ZZ^d.
\end{equation}

\medskip

In a similar fashion we can turn the unbiasedness relations also to linear constraints on the Fourier side. Let $\ub/\vb=(z_1, z_2\dots ,z_d)\in \TT^d$ be the coordinate-wise quotient of any two columns from two \emph{different} matrices from $H_1, \dots H_d$. Then $\ub$ and $\vb$ are unbiased, which means that
\begin{equation}\label{ubinz}
0=|\sum_r z_r|^2-d=\sum_{r\ne t}z_r/z_t.
\end{equation}
Using this we can write
\begin{equation}\label{FGtile}
\sum_{r\ne t} (F-G)(\gamma+\pi_r-\pi_t)= \sum_{\ub, \vb} (\ub/\vb)^{\gamma}\left (\sum_{r\ne t} (\ub/\vb)^{\pi_r-\pi_t} \right )=0,
\end{equation}
where the summation on $\ub, \vb$ goes for all pairs of columns from \emph{different} matrices, and the last equality is satisfied because each inner sum is zero by \eqref{ubinz}. Also, by \eqref{Gtile} we have $dG(\gamma)+\sum_{r\ne t} G(\gamma+\pi_r-\pi_t)=d^4$, and we can use this to rewrite \eqref{FGtile} as
\begin{equation}\label{FGtile2}
dG(\gamma)+\sum_{r\ne t} F(\gamma+\pi_r-\pi_t)=d^4,
\end{equation}
which is somewhat more convenient than \eqref{FGtile}.

\medskip

We also have some further trivial constraints on $F$ and $G$. Namely,
\begin{equation}\label{F0G0}
F(0)=d^4, \ \ \ G(0)=d^3, \ \ \ \ \ \textrm{and}
\end{equation}
\begin{equation}\label{FGpositive}
0\le F(\gamma)\le d^4, \ \ \ 0\le G(\gamma)\le d^3, \ \ \ \textrm{for each} \ \gamma\in \ZZ^d.
\end{equation}
Also, by the inequality of the arithmetic and quadratic means we have
\begin{equation}\label{FleG}
F(\gamma)\le d G(\gamma), \ \ \ \ \ \ \ \ \ \ \ \ \textrm{for each} \ \gamma\in \ZZ^d.
\end{equation}

\medskip

The point is that the linear constraints \eqref{Gtile}, \eqref{FGtile2}, \eqref{F0G0}, \eqref{FGpositive}, \eqref{FleG} put severe restrictions on the functions $F$ and $G$. In fact, it turns out that \emph{all} the structural results on complete systems of MUHs in dimensions 2, 3, 4, 5 follow from these constraints. These structural results are not new (cf. \cite{BWB}) but nevertheless we list here the two most important ones as an illustration of the power of this Fourier approach. The first one is a celebrated theorem of Haagerup \cite{haagerup} which gives a full classification of complex Hadamard matrices of order 5. In the original paper \cite{haagerup} the author combines several clever ideas with lengthy calculations to derive the result, whereas it follows almost for free from the formalism above.

\begin{proposition}\label{haa}
Any complex Hadamard matrix of order 5 is equivalent to the Fourier matrix $F_5$, given by $F_5(j,k)=\omega^{(j-1)(k-1)}$, ($j,k=1,\dots, 5$), where $\omega=e^{2i\pi/5}$.
\end{proposition}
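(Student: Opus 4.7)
The approach is to apply the Fourier formalism of the preceding section to a single matrix, so that only $g = g_1$ and $G = G_1 = |g|^2$ are relevant, and use the tile identity \eqref{Gjtile}, the orthogonality zeros, and the modulus bound $G \le d^2 = 25$ to pin down the support of $G$ completely.

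First, dephase $H$ by multiplying rows and columns by unimodular constants so that its first row and first column are both the all-ones vector; this is a legitimate equivalence. Then $\cb_1 = (1,\ldots,1)$ and $\cb_{k,1} = 1$ for each $k$, so $g(\gamma)$ depends only on $\gamma_2, \ldots, \gamma_5$. One records the initial data $g(0) = g(\pi_1) = g(2\pi_1) = 5$, the ten orthogonality zeros $g(\pi_r - \pi_s) = 0$ for $r \ne s$, and the corresponding values $G = 25$ at the first three points and $G = 0$ at the others.

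Next, substituting $\gamma = \pi_1$ into the tile identity gives $G(2\pi_1) + \sum_{s=2}^5 G(\pi_1 + \pi_s) = 25$, and since $G(2\pi_1) = 25$, non-negativity forces $G(\pi_1 + \pi_s) = 0$ for $s \ge 2$. The core of the argument is a bootstrap alternating between ``a maximum at $\gamma$ forces four zeros in the adjoining tile'' and ``four known zeros in a tile force the remaining entry to equal $25$,'' propagating the structure along all lattice directions. The claim to establish inductively is that, viewed on $\ZZ^4$ after quotienting out the $\pi_1$-direction (in which $G$ is periodic), the support of $G$ is precisely an index-$5$ sublattice $L$ and $G \equiv 25$ on $L$. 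This combinatorial extension is the main obstacle, and it uses primality of $5$ crucially to exclude intermediate sublattices.

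Finally, $G(\gamma) = 25$ is equivalent to $\cb_k^\gamma = \cb_l^\gamma$ for all $k, l$, so $L$ equals the annihilator in $\ZZ^5$ of the subgroup $K \le \TT^5$ generated by the column ratios $\cb_k/\cb_1$. Since the index is $5$, one has $|K| = 5$ and $K$ is cyclic of order $5$, whence every entry of $H$ is a fifth root of unity. A vanishing sum of five fifth roots of unity with nonnegative integer multiplicities uses each root exactly once (as $[\QQ(\omega):\QQ] = 4$), so each dephased row is a permutation of $(1, \omega, \omega^2, \omega^3, \omega^4)$; the orthogonality relations among the rows then pin the matrix down to $F_5$ up to column permutation.
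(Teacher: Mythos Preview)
Your overall strategy coincides with the paper's: work with $G_1=|g_1|^2$, exploit the tiling identity \eqref{Gjtile} together with $0\le G_1\le 25$ and $G_1(0)=25$ to force $G_1=25$ at enough lattice points, deduce that all entries are fifth roots of unity, and finish by the elementary classification of $5\times 5$ Hadamard matrices over the fifth roots. The paper carries out the crucial middle step by a (rational, certifiable) linear program that, from the constraints \eqref{Gjtile}, $G_1(0)=25$, $0\le G_1\le 25$, concludes $G_1(\rho)=25$ for every permutation $\rho$ of $(5,-5,0,0,0)$.

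Where your plan diverges is precisely at that step: you propose to replace the LP by a hand ``bootstrap'' that alternates between ``a value $25$ at $\gamma$ forces the four neighbours $\gamma+e_r$ to vanish'' and ``four known zeros in a tile force the fifth entry to equal $25$.'' This mechanism stalls after the first round. Working in $\ZZ^4$ (after quotienting out the $\pi_1$-direction), from $G(0)=25$ you obtain $G=0$ at the set $Z_1=\{\pm e_r\}\cup\{e_r-e_s:r\ne s\}$, exactly as you say. But a short case check shows that there is \emph{no} tile $T_\gamma=\{\gamma,\gamma+e_2,\gamma+e_3,\gamma+e_4,\gamma+e_5\}$ with four members in $Z_1$ other than the tiles $T_{-e_s}$, each of which already contains the origin and hence yields no new value of $25$. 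So the ``four zeros $\Rightarrow$ new maximum'' half of your alternation never fires a second time, and the induction does not propagate. (Likewise, the single known maximum at $0$ produces no zeros beyond $Z_1$.) Establishing $G(5e_r)=25$, let alone the full index-$5$ sublattice picture you describe, therefore requires combining \emph{many} tile identities simultaneously rather than one at a time --- which is exactly what the linear program in the paper does. If you want a genuinely computer-free proof, this is the point where a new idea is needed; the primality of $5$ alone does not bridge the gap.

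The remainder of your plan (Pontryagin duality to get $|K|=5$, hence fifth roots; uniqueness of vanishing sums of five fifth roots; and orthogonality forcing the permutation structure of $F_5$) is correct and matches the paper's concluding ``trivial to check'' step, only spelled out more explicitly.
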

\begin{proof}
Let $H_1$ be a complex Hadamard matrix of order 5. Then the function $G_1(\gamma)=|\hat{H_1}(\gamma)|^2$ satisfies equation \eqref{Gjtile} for all $\gamma\in \ZZ^5$, and we have $G_1(0)=25$ and $0\le G_1(\gamma)\le 25$ for all $\gamma\in \ZZ^5$. Regarding each $G_1(\gamma)$ as a nonnegative variable (as $\gamma$ ranges through a sufficiently large cube around the origin in $\ZZ^5$), a short linear programming code testifies that under these conditions $G_1(\rho)=25$ for all such $\rho\in \ZZ^5$ which is a permutation of $(5,-5,0,0,0)$. Also, we may assume without loss of generality that $H_1$ is normalized (i.e. its first row and column are made up of 1s), and then the information $G_1(\rho)=25$ implies that all other entries of $H_1$ are 5th roots of unity. It is then trivial to check that there is only one way (up to equivalence) to build up a complex Hadamard matrix from 5th roots of unity, namely the matrix $F_5$.
\end{proof}

We remark here all the linear programming mentioned in this paper uses rational coefficients, so no numerical errors are encountered, and each result is certifiable.
Let us also remark that Proposition \ref{haa} is the only \emph{non-trivial} result concerning MUHs and MUBs in dimensions $d\le 5$. The classification of complex Hamamard matrices and MUBs is more or less trivial for $d=2, 3, 4$ due to the geometry of complex unit vectors. We give here the essence of this classification (for full details see \cite{BWB}).

\begin{proposition}\label{2345}
In any normalized complete system of MUHs in dimension $d=3, 4, 5$ all entries of the matrices are $d$th roots of unity. For $d=2$ all entries are 4th roots of unity.
\end{proposition}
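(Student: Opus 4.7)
The plan is to extend the linear programming argument of Proposition \ref{haa} to each dimension in question. In every case the target is to certify that $G_j(\rho) = d^2$ whenever $\rho \in \ZZ^d$ is a permutation of $(d, -d, 0, \ldots, 0)$; once this is known, a short triangle-inequality bootstrap together with normalization forces the entries to be $d$th roots of unity.

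For $d = 5$ the statement is already contained in Proposition \ref{haa}: after normalization the matrix is $F_5$, whose entries are $5$th roots of unity. For $d = 3$ the analogous LP, restricted to the constraints \eqref{Gjtile}, \eqref{F0G0}, \eqref{FGpositive} governing a single $G_1$, already suffices, and recovers the classical fact that every $3\times 3$ complex Hadamard matrix is equivalent to $F_3$. The case $d = 4$ is the substantive one: the $G_1$-constraints alone cannot work, because there is a one-parameter family of inequivalent $4\times 4$ Hadamard matrices whose entries are not all $4$th roots of unity. Here I would set up an LP in the variables $\{F(\gamma), G(\gamma)\}$ indexed by a symmetric cube $C \subset \ZZ^4$ large enough to accommodate the shifts by $\pi_r$ and $\pi_r - \pi_t$ appearing in \eqref{Gtile} and \eqref{FGtile2}, and impose these together with \eqref{F0G0}, \eqref{FGpositive}, \eqref{FleG}. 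With rational coefficients the LP should certify that for every permutation $\rho$ of $(4, -4, 0, 0)$ the unique admissible value is $G(\rho) = d^3 = 64$.

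Once $G(\rho) = d^3$ is established, the upgrade is short. Writing $G(\rho) = \sum_{j=1}^d G_j(\rho)$ with $G_j(\rho) = |g_j(\rho)|^2 \le d^2$ (the triangle inequality applied to the unimodular summands of $g_j(\rho) = \sum_k \cb_k^\rho$), the equality $G(\rho) = d^3$ forces $G_j(\rho) = d^2$ for every $j$, and the triangle-inequality equality case says that $\cb_k^\rho$ is independent of $k$. Normalization supplies the base value: the first column of $H_1$ is $(1,\ldots,1)^T$ and every column of every $H_j$ has first coordinate $1$, so $\cb_k^\rho = 1$ for every column. Choosing $\rho = d\pi_r - d\pi_1$ for each $r \ge 2$ then reads $(\cb_k)_r^d = 1$, proving the claim.

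The case $d = 2$ is handled directly: the only normalized $2\times 2$ Hadamard matrix is $H_1 = \left(\begin{smallmatrix}1&1\\1&-1\end{smallmatrix}\right)$, and any normalized $H_2$ unbiased with it must be $\left(\begin{smallmatrix}1&1\\ \pm i& \mp i\end{smallmatrix}\right)$, whose entries are $4$th roots of unity. The main obstacle is the $d=4$ LP: one must fix a cube $C$ large enough for the dual certificate $G(\rho) = d^3$ to actually be reached. Unlike Proposition \ref{haa}, this step genuinely uses the mutual unbiasedness \eqref{FGtile2}, so the certificate will be more intricate; but it is a finite rational LP, and therefore exactly certifiable in the spirit of the remark following Proposition \ref{haa}.
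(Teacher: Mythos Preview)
Your plan is essentially the paper's own: run a rational LP on the Fourier-side variables and read off an extremal value that forces roots of unity. There is, however, a genuine slip in the ``upgrade'' step.

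From $G(\rho)=d^3$ you correctly obtain $G_j(\rho)=d^2$ for every $j$, and hence that $\cb_k^{\rho}$ is constant as $k$ ranges over the columns \emph{of a fixed} $H_j$. Your anchor ``the first column of $H_1$ is $(1,\ldots,1)^T$'' then pins this constant to $1$ only for $j=1$. For $j\ge 2$ there is no all-ones column in $H_j$, and the clause ``every column of every $H_j$ has first coordinate $1$'' only simplifies $\cb_k^{\rho}$ to $(\cb_k)_r^{\,d}$ without determining its value. So from $G(\rho)=d^3$ alone you can conclude that within each row of each $H_j$ all entries share a common $d$th power, but not that this common value equals $1$. (Concretely: $\mathrm{diag}(1,e^{i\theta_2},\ldots,e^{i\theta_d})F_d$ has first row all $1$'s and satisfies $G_1(\rho)=d^2$, yet its entries are not $d$th roots of unity.) The same issue affects your $d=5$ paragraph: citing Proposition~\ref{haa} shows each $H_j$ is \emph{equivalent} to $F_5$, but in a normalized system only $H_1$ has both first row and first column equal to $1$, so for $j\ge 2$ equivalence to $F_5$ does not by itself force $5$th-root entries.

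The paper sidesteps this by targeting $F(\rho)=d^4$ rather than $G(\rho)=d^3$. Since $f(\rho)$ is a sum of $d^2$ unimodular terms, $F(\rho)=d^4$ forces $\cb_k^{\rho}$ to be the same across \emph{all} columns of \emph{all} matrices, and then the single all-ones column of $H_1$ anchors this global constant at $1$. (The remark after Proposition~\ref{2345} in fact notes that for $d=4$ the LP certifies $F(\rho)=4G(\rho)=256$, so both extremal values fall out together.) The fix to your argument is immediate: have the LP certify $F(\rho)=d^4$ instead of, or in addition to, $G(\rho)=d^3$; this also gives a uniform treatment of $d=3,4,5$ without appealing to Proposition~\ref{haa} separately.
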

\begin{proof}
Let $d=3, 4, 5$. Assume $H_1, \dots H_d$ is a normalized complete system of MUHs. Then the functions $F$ and $G$ must satisfy the linear constraints \eqref{Gtile}, \eqref{FGtile2}, \eqref{F0G0}, \eqref{FGpositive}, \eqref{FleG}. Regarding each $F(\gamma)$ and $G(\gamma)$ as a nonnegative variable (as $\gamma$ ranges through a sufficiently large cube around the origin in $\ZZ^d$), a short linear programming code testifies that under these conditions $F(\rho)=d^4$ for all such $\rho\in \ZZ^d$ which is a permutation of $(d,-d,0,\dots ,0)$. This means that all entries in all of the matrices must be $d$th roots of unity. The proof is analogous for $d=2$ except that in this case we can only conclude $F(4,-4)=16$, so that the matrices contain 4th roots of unity.
\end{proof}

\medskip

Let us make a remark here about $d=4$. In this case it is \emph{not true} that all normalized Hadamard matrices must be composed of 4th roots of unity. However, it is true that a complete system of MUHs must be composed of such. This phenomenon shows up very clearly in our linear programming codes. Writing the constraints \eqref{Gjtile} on $G_1(\gamma)$, and $G_1(0)=16$, and $0\le G_1(\gamma)\le 16$ does not enable us to conclude that $G_1(\rho)=16$ with $\rho$ being a permutation of $(4,-4,0,0$. However, writing all the constraints \eqref{Gtile}, \eqref{FGtile2}, \eqref{F0G0}, \eqref{FGpositive}, \eqref{FleG} on the functions $F$ and $G$ we can indeed conclude that $F(\rho)=4G(\rho)=256$.

\medskip

We end this section with a few remarks concerning $d=6$. If we could similarly conclude that
\begin{equation}\label{d6}
F(\rho)=6^4 \ \  \ \ \ \textrm{for all} \ \rho \ \textrm{being a permutation of} \ (6,-6,0,0,0,0)
\end{equation}
then it would mean that a complete system of normalized MUHs in dimension 6 can only be composed of 6th roots of unity. Such a structural information would be wonderful, as it is easy to check by computer that no such complete system of MUHs exists. Therefore, we could conclude that a complete system of MUHs does not exist at all. Unfortunately, the constraints \eqref{Gtile}, \eqref{FGtile2}, \eqref{F0G0}, \eqref{FGpositive}, \eqref{FleG} do not seem to imply \eqref{d6}. At least, we have run a linear programming code with $\gamma$ ranging through as large a cube as possible (due to computational limitations), and could not conclude \eqref{d6}. Nevertheless, our main strategy for future research in dimension 6 must be as follows: using the linear constraints on $F$ and $G$ try to establish some structural information on the vectors appearing in a hypothetical complete system of MUHs, and then show by other means (e.g. a brute force computer search) that such constraints cannot be satisfied. We formulate here one conjecture which could be crucial in proving the non-existence of a complete system of MUHs in dimension 6.

\begin{conjecture}\label{conj}
Let $H_1$ be any complex Hadamard matrix of order 6, not equivalent to the isolated matrix $S_6$ (cf. \cite{karol} for the matrix $S_6$). Let $\rho$ be any permutation of the vector $(1,1,1,-1,-1,-1)$. Then $g_1(\rho)=0$ for the function $g_1$ defined in \eqref{gj}.
\end{conjecture}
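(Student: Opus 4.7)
The plan is to reinterpret $g_1(\rho)$ as a structured inner product of Hadamard-row products and then attack via a combination of the Fourier machinery of Section~\ref{sec2} and the partial classification of $6\times 6$ complex Hadamard matrices. For $\rho$ a permutation of $(1,1,1,-1,-1,-1)$, let $A,B\subset\{1,\dots,6\}$ be the positions of its $+1$ and $-1$ entries. Writing $h_{i,k}$ for the entries of $H_1$,
\begin{equation*}
g_1(\rho)=\sum_{k=1}^6\cb_k^\rho=\sum_{k=1}^6\prod_{i\in A}h_{i,k}\prod_{j\in B}\overline{h_{j,k}}=\scal{\wb_B,\wb_A},
\end{equation*}
where $\wb_A,\wb_B\in\TT^6$ are the coordinatewise (Hadamard) products of the three rows indexed by $A$ and $B$. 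So the conjecture says that for every $3{:}3$ partition of the rows the two resulting triple-row products are orthogonal. The identity is invariant under Hadamard equivalence, so one may normalize $H_1$ to have its first row and first column equal to $(1,\dots,1)$.

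The first route I would pursue sharpens the Fourier methodology of the paper. Beyond the linear constraints \eqref{Gjtile}, \eqref{F0G0}, \eqref{FGpositive}, \eqref{FleG}, the function $g_1$ itself (not merely $G_1=|g_1|^2$) is a character sum on the columns of $H_1$ and so carries more rigid information than $G_1$ alone. Under the assumption $H_1\not\cong S_6$, the defect of $H_1$ is positive, so $H_1$ sits on a smooth family of complex Hadamard matrices, and first-order perturbations along this family yield additional linear identities among the values $g_1(\gamma)$. I would attempt to express $g_1(\rho)$, for $\rho$ a permutation of $(1,1,1,-1,-1,-1)$, as a linear combination of the already known vanishing values $g_1(\pi_r-\pi_s)$ plus a derivative term that vanishes precisely when a nontrivial tangent direction to the Hadamard variety exists -- i.e.\ precisely when $H_1\not\cong S_6$.

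In parallel I would verify the conjecture on each of the known parametric families in the classification of order-$6$ complex Hadamard matrices (the Fourier families $F_6(a,b)$, the Dita family, the Karlsson $K_6$ family, the Szöllősi family). For $F_6(a,b)$ the check is immediate: after factoring out the unimodular parameters from the row products, the Hadamard product along three rows is a geometric progression in $k$ of frequency $\sum_{i\in A}(i-1)\bmod 6$, whose difference from the complementary frequency is $2\sum_{i\in A}(i-1)-15$, an odd integer and hence nonzero mod $6$; this forces $\scal{\wb_B,\wb_A}=0$. The remaining families require substituting the parametric form and iteratively invoking the row-orthogonality relations to collapse the resulting degree-$6$ expression to zero.

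The main obstacle is twofold. First, a complete classification of order-$6$ complex Hadamard matrices is not yet available, so a purely case-based argument is at best conditional on classification progress. Second, even on the known strata the Szöllősi/Karlsson family has intricate algebraic defining relations that must be carefully exploited, and it is precisely on this stratum that $S_6$ appears as the unique singular point; any case-by-case treatment must therefore carve $S_6$ out by hand from its neighbours. For these reasons I would prioritize the defect/tangent-space approach, which, if it succeeds, yields a classification-independent proof and isolates $S_6$ automatically as the unique exception.
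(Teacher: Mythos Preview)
The statement you are addressing is labelled a \emph{Conjecture} in the paper, and the authors say explicitly that they cannot prove it; they offer only numerical evidence from sampling the known families. There is therefore no paper proof to compare your attempt against.

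Your proposal is a research outline rather than a proof, and the gaps are the ones you yourself flag. On the defect route: the assertion that every order-$6$ complex Hadamard matrix not equivalent to $S_6$ has positive defect is, absent a full classification, itself only conjectural (one knows $S_6$ is isolated, not that nothing else is). Even granting positive defect, you give no concrete mechanism linking a tangent vector to the Hadamard variety with the vanishing of $g_1(\rho)$ for the specific exponent pattern $(1,1,1,-1,-1,-1)$; the claimed decomposition of $g_1(\rho)$ into ``known zeros plus a derivative term'' is a hope, not an identity. On the family-by-family route: as you say, it is conditional on a classification that does not exist, so at best it reproduces the paper's numerical evidence in closed form on the known strata. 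Your $F_6(a,b)$ sketch also needs more care: the parameters $a,b$ sit on individual entries in a pattern that depends on both row and column, so they do not in general factor out of the triple row product as a $k$-independent scalar; the orthogonality of $\wb_A$ and $\wb_B$ for arbitrary $3{:}3$ splits requires checking the actual parameter pattern, not just the underlying $F_6$ frequencies.

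In short, nothing here is wrong in spirit, but nothing here closes the problem either; the paper's authors left it open for the same reasons.
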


This conjecture is supported heavily by numerical data. We have tried hundreds of matrices randomly from each known family of complex Hadamard matrices of order 6 (a complete classification is unfortunately not available). Currently we cannot prove this conjecture, but in Section 3 we will show an example of how it could be used in the proof of non-existence results (cf. Remark \ref{noFab}).

\section{Non-existence results}\label{sec3}

We now turn to non-existence results, namely that complete systems of MUHs with certain properties do not exist. The first of these, which we regard as the main result of the paper, is that any pair of \emph{real} unbiased Hadamard matrices cannot be part of a complete system of MUHs. In fact, we prove the following stronger statement.

\begin{theorem}\label{thmnoreal}
Let $H_1, \dots H_d$ be a complete system of MUHs such that $H_1$ is a real Hadamard matrix. Then any column vector $\vb=(v_1, \dots , v_d)$ of the other matrices $H_2, \dots H_d$ satisfies that $\sum_{k=1}^d v_k^2=0$.
\end{theorem}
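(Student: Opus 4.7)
The plan is to stay inside the paper's Fourier framework: use the linear constraints of Section~\ref{sec2} to show $f(2\pi_r - 2\pi_{r'}) = d^{2}\delta_{rr'}$ for the function $f$ of \eqref{f}, rewrite the natural quantity $\sum_{j,k}\bigl|\sum_r c_{j,k,r}^{2}\bigr|^{2}$ in Fourier form, and use the reality of $H_1$ to force the remaining columns to satisfy $\sum_r v_r^{2} = 0$.

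First I would establish that $F(2\pi_r - 2\pi_{r'}) = 0$ whenever $r \neq r'$. Set $\gamma = \pi_r - \pi_{r'}$ in \eqref{FGtile2}. Row orthogonality of each $H_j$ gives $g_j(\pi_r - \pi_{r'}) = 0$ (exactly as in \eqref{gj0}), so $G(\pi_r - \pi_{r'}) = 0$, and \eqref{FGtile2} becomes
\[
\sum_{p \neq q} F(\pi_r - \pi_{r'} + \pi_p - \pi_q) = d^{4}.
\]
Among the pairs $(p,q)$ with $p \neq q$, the argument $\pi_r - \pi_{r'} + \pi_p - \pi_q$ vanishes only for $(p,q) = (r', r)$, which contributes $F(0) = d^{4}$; since every $F$-value is non-negative, every other term in the sum must be zero. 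Taking $(p,q) = (r, r')$ in particular gives $F(2\pi_r - 2\pi_{r'}) = 0$, hence $f(2\pi_r - 2\pi_{r'}) = 0$.

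Next, writing $c_{j,k,r}$ for the $r$-th coordinate of the $k$-th column of $H_j$, I would expand
\[
\sum_{j=1}^{d}\sum_{k=1}^{d}\Bigl|\sum_{r=1}^{d} c_{j,k,r}^{2}\Bigr|^{2}
= \sum_{r,r'=1}^{d}\sum_{j,k}(c_{j,k,r}/c_{j,k,r'})^{2}
= \sum_{r,r'=1}^{d} f(2\pi_r - 2\pi_{r'}).
\]
By the first step the only surviving summands are those with $r = r'$, each contributing $f(0) = d^{2}$, so the total equals $d \cdot d^{2} = d^{3}$. Since $H_1$ is real, $c_{1,k,r}^{2} = 1$ and $|\sum_r c_{1,k,r}^{2}|^{2} = d^{2}$, so the $d$ columns of $H_1$ alone account for the full $d \cdot d^{2} = d^{3}$. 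Therefore $\sum_{j=2}^{d}\sum_{k}|\sum_r c_{j,k,r}^{2}|^{2} = 0$, and non-negativity of each term forces $\sum_{r=1}^{d} v_r^{2} = 0$ for every column $\vb = (v_1, \ldots, v_d)$ of $H_2, \ldots, H_d$, which is the theorem.

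The main delicate step is the book-keeping in the first paragraph: one must check that no $(p,q)$ other than $(r', r)$ makes the argument of $F$ vanish, so that $F(0)$ appears exactly once in the sum and saturates the right-hand side $d^{4}$. Once this is verified, every other step is a straightforward algebraic manipulation, so I expect no further obstacles.
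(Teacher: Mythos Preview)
Your proof is correct and is essentially the paper's own argument: both establish $f(2\pi_r-2\pi_{r'})=0$ for $r\ne r'$, then compute $\sum_{\text{all columns}}\bigl|\sum_r v_r^{2}\bigr|^{2}=d^{3}$ and observe that the $d$ real columns of $H_1$ already saturate this bound, forcing the remaining terms to vanish. The only difference is that the paper obtains $f(2\pi_r-2\pi_{r'})=0$ by quoting an external result (Theorem~8 of \cite{belovs}, Corollary~2.4 of \cite{mubfourier}), whereas you derive it self-containedly from \eqref{FGtile2}, \eqref{gj0}, \eqref{F0G0} and non-negativity of $F$ --- a pleasant bonus, but not a different route.
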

\begin{proof}
Let $0\ne \rho=(r_1,\dots , r_d)\in\ZZ^d$ be such that $\sum_{k=1}^d r_k=0$ and $\sum_{k=1}^d |r_k|\le 4$. There are five types of these vectors (up to permutation): $(1,-1, 0,\dots ,0)$, $(2,-2,0,\dots ,0)$, $(2, -1, -1, 0, \dots, 0)$, $(-2, 1, 1, 0, \dots ,0)$, and $(1,1,-1,-1,0,\dots 0)$. Then, Theorem 8 in \cite{belovs} (or Corollary 2.4 in \cite{mubfourier}) shows that the function $f$ defined in \eqref{f} satisfies
\begin{equation}\label{Frho}
f(\rho)=0
\end{equation}
for all these vectors $\rho$.

\medskip

Let $\cb_1,\cb_2, \dots , \cb_{d^2}$ denote the column vectors appearing in the system $H_1, \dots H_d$. For each $\gamma\in \ZZ^d$ let
\begin{equation}\label{vgamma}
\vb(\gamma)=(\cb_1^\gamma, \dots \cb_{d^2}^\gamma)\in \TT^{d^2}
\end{equation}
for $k=1, \dots d$. Consider the vectors $\gamma_k=(0, \dots 0, 2, 0,\dots 0)\in \ZZ^d$ with the 2 appearing in position $k$. Finally, consider the vector $\wb = \sum_{k=1}^d \vb(\gamma_k)$, and let us evaluate $\|\wb\|^2$. On the one hand, the vectors $\vb(\gamma_k)$ are all orthogonal to each other by \eqref{Frho}, and they all have length $\|\vb(\gamma_k)\|^2=d^2$, and hence  $\|\wb\|^2=d^3$. On the other hand \emph{we know} the first $d$ coordinates of $\wb$. Each $\vb(\gamma_k)$ has first $d$ coordinates equal to 1, because $H_1$ is a real Hadamard matrix. Therefore the first $d$ coordinates of $\wb$ are all equal to $d$. Therefore, $\|\wb\|^2\ge d^3$ on account of the first $d$ coordinates. Hence, all other coordinates of $\wb$ must be zero, which is exactly the statement of the theorem.
\end{proof}

\medskip

Theorem \ref{thmnoreal} implies immediately the following corollary.
\begin{corollary}\label{cornoreal}
Let $H_1, \dots H_d$ be a complete system of MUHs such that $H_1$ is a real Hadamard matrix. Then there is no further purely real column in any of the matrices $H_2, \dots,H_d$. In particular, it is impossible to have two real Hadamard matrices in a complete set of MUHs.
\end{corollary}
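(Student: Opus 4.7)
The plan is to derive Corollary \ref{cornoreal} as a nearly immediate consequence of Theorem \ref{thmnoreal}. The theorem already constrains every column of $H_2, \dots, H_d$ to satisfy $\sum_{k=1}^d v_k^2=0$, so the task reduces to observing that a purely real unimodular vector cannot possibly satisfy this identity.

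First I would fix a column $\vb=(v_1,\dots,v_d)$ belonging to one of $H_2,\dots,H_d$ and suppose, toward a contradiction, that $\vb$ is purely real. Since $\vb$ is a column of a complex Hadamard matrix, each $v_k\in\TT\cap\RR=\{-1,+1\}$. Consequently $v_k^2=1$ for every $k$, and hence $\sum_{k=1}^d v_k^2=d$. On the other hand, Theorem \ref{thmnoreal} forces this sum to equal $0$. Since $d\ge 2$, this is a contradiction, proving the first assertion of the corollary: no column of $H_2,\dots,H_d$ can be purely real.

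The "In particular" clause then follows at once: if some $H_j$ with $j\ge 2$ were itself a real Hadamard matrix, then every column of $H_j$ would be a purely real vector in $\TT^d$, contradicting what was just established for even a single column.

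I do not expect any obstacle here; the entire content of the corollary is packaged into Theorem \ref{thmnoreal}, and the short argument above is essentially the only way to spend that content. The only mild care needed is to recall that the entries of a complex Hadamard matrix in our normalization lie on $\TT$, which is what makes "real entry" collapse to $\pm 1$ and hence $v_k^2=1$.
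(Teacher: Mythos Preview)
Your argument is correct and is exactly the intended one: the paper treats the corollary as an immediate consequence of Theorem \ref{thmnoreal}, and you have simply spelled out that ``immediate'' step (real unimodular entries are $\pm 1$, so $\sum v_k^2=d\ne 0$).
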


This statement is sharp in the sense that for $d=2, 4$ the complete systems of MUHs are known to contain \emph{one} real Hadamard matrix. Also, in several dimensions $d=4n^2$ pairs (and even larger systems) of real unbiased Hadamard matrices are known to exist \cite{hadi1, hadi2}, so that the corollary above is meaningful and non-trivial. 

\medskip

Our next result is a new proof of the fact in dimension 6 the Fourier matrix $F_6$ cannot be part of a complete system of MUHs. This result is well-known, but the only proof we are aware of uses some computer algebra, while we present an easy conceptual proof here.

\begin{proposition}\label{noF6}
There exists no complete system of MUHs in dimension 6 which contains the Fourier matrix $F_6$.
\end{proposition}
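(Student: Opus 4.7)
The plan is to adapt the squared-columns strategy of Theorem~\ref{thmnoreal}, using Corollary~2.4 of \cite{mubfourier} to set up $\TT^{36}$-orthogonalities and the character structure of $F_6$ to evaluate the $H_1$ block. The key identity is that for every $\gamma\in\ZZ^6$ the $m$-th column of $F_6$ satisfies $\cb_m^\gamma=\omega^{(m-1)s(\gamma)}$, where $s(\gamma):=\sum_j(j-1)\gamma_j$; consequently $\sum_m\cb_m^\gamma=6$ precisely when $s(\gamma)\equiv 0\pmod 6$.

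Running the proof of Theorem~\ref{thmnoreal} verbatim with $\gamma_k=2\pi_k$ gives, by Corollary~2.4 applied to the weight-$4$ zero-sum differences $2\pi_j-2\pi_k$, the identity $\|\sum_k\vb(2\pi_k)\|^2=216$; since $\sum_k\omega^{2(k-1)(m-1)}$ equals $6$ only for $m\in\{1,4\}$, the $F_6$-block of $\sum_k \vb(2\pi_k)$ equals $(6,0,0,6,0,0)$ and contributes only $72$, leaving
\[
\sum_\vb\Bigl|\sum_k v_k^2\Bigr|^2=144
\]
over the thirty columns $\vb=(v_1,\dots,v_6)$ of $H_2,\dots,H_6$. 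Using the four-element family $\Gamma_0=\{2\pi_1,\,2\pi_4,\,\pi_2+\pi_6,\,\pi_3+\pi_5\}$, whose elements have constant coordinate sum $2$, all $s(\gamma)\equiv 0\pmod 6$, and all pairwise differences of weight $4$ with zero sum, the same recipe gives a full $F_6$-block contribution of $6\cdot 4^2=96$ against a total $4\cdot 36=144$, yielding
\[
\sum_\vb\bigl|v_1^2+v_4^2+v_2v_6+v_3v_5\bigr|^2=48.
\]
Two further identities of the same shape follow from the analogous families $\{2\pi_2,2\pi_5,\pi_1+\pi_3,\pi_4+\pi_6\}$ and $\{2\pi_3,2\pi_6,\pi_1+\pi_5,\pi_2+\pi_4\}$ by cyclic symmetry.

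The main obstacle is converting these four simultaneous moment identities into a genuine contradiction; the $\Gamma_0$-type families appear to be maximal (no fifth vector with $s\equiv 0\pmod 6$ sits at zero-sum pairwise distance $\le 4$ from all four), so one does not get outright vanishing as in the real case. My plan is to reformulate the $F_6$-unbiasedness of each $\vb$ as the biunimodular condition $|\widehat\vb(a)|^2=6$ for every $a\in\ZZ/6$, so that each $\vb$ is parametrized by six unit phases, rewrite the quadratic expressions $v_k^2$ and $v_kv_l$ as short sums of these phases via pointwise-product/convolution duality, and combine with the intra-matrix orthogonality and inter-matrix unbiasedness of $H_2,\dots,H_6$ to overdetermine the thirty biunimodular vectors. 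The delicate step is isolating a single algebraic identity on the Fourier phases that must hold and yet cannot, yielding the contradiction.
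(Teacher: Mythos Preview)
Your proposal is incomplete, and the gap is exactly where you say the ``main obstacle'' lies: you stop at moment identities that are not tight and then appeal to an undeveloped biunimodular parametrization to manufacture a contradiction. That last step is not a proof, it is a hope.

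The underlying misstep is the claim that your $\Gamma_0$-type families are maximal. You restricted yourself to nonnegative exponent vectors with coordinate sum $2$, and within that search space a size-four family may well be the best one can do. But if you allow coordinate sum $4$, there is a \emph{six}-element family
\[
\gamma_1=(1,1,1,0,0,1),\ \gamma_2=(0,0,1,1,1,1),\ \gamma_3=(1,1,0,1,1,0),\ \gamma_4=(0,1,0,1,0,2),\ \gamma_5=(1,0,0,0,1,2),\ \gamma_6=(0,1,0,0,2,1)
\]
in which every $\gamma_k$ has $s(\gamma_k)\equiv 2\pmod 6$ and every pairwise difference has zero sum and $\ell_1$-norm exactly $4$. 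With this family the $F_6$-block of $\wb=\sum_k\vb(\gamma_k)$ contributes the full $6\cdot 6^2=216=\|\wb\|^2$, so the remaining thirty coordinates of $\wb$ are forced to vanish outright --- precisely the situation you enjoyed in Theorem~\ref{thmnoreal}. No moment inequalities, no biunimodular detour.

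Once you have vanishing, the paper's second idea finishes the job: replace $\gamma_6$ by $\gamma_6'=(2,0,0,1,0,1)$ (same $s$-residue, same distance pattern to $\gamma_1,\dots,\gamma_5$), observe that $\vb(\gamma_6)$ and $\vb(\gamma_6')$ then agree on the last thirty coordinates, and read off the monomial identity $z_2z_5^2=z_1^2z_4$ for every column of $H_2,\dots,H_6$. A cyclic shift of coordinates yields $z_5z_2^2=z_4^2z_1$, and dividing gives $z_5/z_2=z_1/z_4$. This says the last thirty coordinates of $\vb(0,-1,0,0,1,0)$ and $\vb(1,0,0,-1,0,0)$ coincide, contradicting their orthogonality from \eqref{Frho}.

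So your framework was right; what was missing was (i) searching for the family among coordinate-sum-$4$ vectors rather than sum-$2$, and (ii) the swap-and-compare trick that converts vanishing into a clean monomial relation.
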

\begin{proof}
Assume by contradiction that such a system $H_1, \dots H_6$ exists, and assume $H_1=F_6$.
Consider the vectors $\gamma_1=(1,1,1,0,0,1)$, $\gamma_2=(0,0,1,1,1,1)$, $\gamma_3=(1,1,0,1,1,0)$, $\gamma_4=(0,1,0,1,0,2)$, $\gamma_5=(1,0,0,0,1,2)$, and $\gamma_{6}=(0,1,0,0,2,1)$, and consider the corresponding vectors $\vb(\gamma_k)$ defined in \eqref{vgamma}, and let $\wb = \sum_{k=1}^6 \vb(\gamma_k)$. All the vectors $\vb(\gamma_k)$ are orthogonal to each other by \eqref{Frho}, therefore $\|\wb\|^2=216$. On the other hand, we know the first 6 coordinates of $\wb$. It is easy to calculate that each of these coordinates has modulus 6, and therefore $\|\wb\|^2\ge 216$ on account of the first 6 coordinates. This implies that all the other coordinates of $\wb$ must be zero. This yields a polynomial identity for the coordinates of any column vector appearing in the matrices $H_2, \dots , H_6$. Instead of using this identity directly, however, we observe that the same argument applies to the vectors $\gamma_1, \dots \gamma_5$ and $\gamma_{6}'=(2,0,0,1,0,1)$, and $\wb'=\vb(\gamma_{6}')+\sum_{k=1}^5 \vb(\gamma_k)$. By considering the difference $\wb-\wb'$ we conclude that $\vb(\gamma_6)$ and $\vb(\gamma_{6}')$ must coincide in the last 30 coordinates. That is, if $(z_1, \dots ,z_6)$ is any column vector in the matrices $H_2, \dots H_6$ then $z_2z_5^2z_6=z_1^2z_4z_6$, and hence $z_2z_5^2=z_1^2z_4$. Furthermore, one can  permute the coordinates of $\gamma_k$ in a cyclic manner, and the argument remains unchanged, yielding this time $z_5z_2^2=z_4^2z_1$. Dividing these two equations finally gives $z_5/z_2=z_1/z_4$ for each of the last 30 vectors in our complete system of MUHs. This means, by definition,  that the last 30 coordinates of the vectors $\vb(0,-1,0,0,1,0)$ and $\vb(1,0,0,-1,0,0)$ coincide. But this is a contradiction, because these vectors should be orthogonal to each other by \eqref{Frho}.
\end{proof}

\medskip

\begin{remark}\label{noFab}\rm
Finally, we discuss informally a non-existence result in which we use Conjecture \ref{conj} in the proof. Nevertheless, the result itself is not "conditional" because it was proved earlier in \cite{arxiv} by a massive computer search after a discretization scheme. The argument we present here is much more elegant though, and shows a possible way forward in proving the non-existence of complete systems of MUHs in dimension 6.

We claim that there exists no complete system $H_1,\dots ,H_6$ of MUHs in dimension 6 which contains any of the matrices $F_6(a,b)$ of the Fourier family (cf. \cite{karol} for the Fourier family $F_6(a,b)$). We sketch the proof here, on the condition that Conjecture \ref{conj} is valid. First of all, the statement is equivalent for the transposed family $F^T_6(a,b)$, as it is well-known that a matrix $H$ can be part of a complete system of MUHs if and only if $H^T$ can. The significance of this is that each member of the transposed family $F^T_6(a,b)$ contains the three vectors $\cb_1=(1,1,1,1,1,1)$, $\cb_2=(1,\omega,\omega^2,1,\omega,\omega^2)$ and $\cb_3=(1,\omega^2,\omega,1,\omega^2,\omega)$, where $\omega=e^{2i\pi/3}$. Also, it is well-known that a complex Hadamard matrix equivalent to $S_6$ cannot be part of a complete system of MUHs, so that we can assume without loss of generality that $H_1,\dots H_6$ are not equivalent to $S_6$. The significance of this fact is that now Conjecture \ref{conj} (if true) can be invoked. One can make a clever selection of vectors $\gamma_1, \dots \gamma_{12}\in \ZZ^6$ such that the same argument as in Proposition \ref{noF6} can be used. Namely, all the vectors $\vb(\gamma_k)$ are orthogonal to each other by either \eqref{Frho} or by Conjecture \ref{conj}, so that $\|\wb \|^2=|\sum_{k=1}^{12}\vb(\gamma_k)|^2=432$. (This is where we use Conjecture \ref{conj}; otherwise the appropriate selection of $\gamma_1,\dots \gamma_{12}$ would not be possible.) On the other hand, three coordinates of $\wb$ corresponding to the columns $\cb_1, \cb_2, \cb_3$ are known exactly, and the modulus of them happens to be 12, again by the choice of the vectors $\gamma_k$. This leads us to conclude that all the other 33 coordinates of $\wb$ must be zero, which yields a polynomial identity for the coordinates of each the 33 unknown columns in $H_1, \dots ,H_6$.  Finally, one can make several such selection of $\gamma_1, \dots \gamma_{12}$, each yielding a polynomial constraint on the unknown columns, and these constraints contradict each other just as in the proof of Proposition \ref{noF6}. $\hfill \square$
\end{remark}

\medskip

We believe that the proof of the non-existence of complete systems of MUHs in dimension 6 will hinge on Conjecture \ref{conj}. The reason is that it introduces yet another non-trivial linear constraint on the function $G$, and these constraints will ultimately lead to a contradiction (maybe indirectly, as in Proposition \ref{noF6}). Therefore, we would be very interested to see a proof of Conjecture \ref{conj}.

\end{document}